\documentclass[10pt]{amsart}
\usepackage[utf8x]{inputenc}
\usepackage{dsfont}
\usepackage{amsmath}
\usepackage{amssymb}
\usepackage{graphicx}
\usepackage{amssymb}
\usepackage{amsfonts}
\usepackage{soul}
\usepackage{mathpazo}
\usepackage{color}
\usepackage{yfonts}
\usepackage{paralist}
\usepackage{stmaryrd}
\usepackage{amsxtra}
\usepackage{latexsym,mathrsfs}
\usepackage{verbatim}
\usepackage{mathabx}

\usepackage{epsfig, enumerate}
\usepackage{color}
\usepackage{hyperref}

\newtheorem{theorem}{Theorem}
\newtheorem*{lemma*}{Claim}
\newtheorem{lemma}[theorem]{Lemma}
\newtheorem{coro}[theorem]{Corollary}

\newtheorem{prop}[theorem]{Proposition}

 \theoremstyle{definition}

\numberwithin{theorem}{section}
\numberwithin{equation}{section}
\definecolor{turquoise}{cmyk}{0.65,0,0.1,0.1}
\definecolor{purple}{rgb}{0.65,0,0.65}
\definecolor{green}{rgb}{0, 0.5, 0}
\definecolor{blue}{rgb}{0, 0, 1}
\definecolor{orange}{rgb}{0.8, 0.6, 0.2}
\definecolor{red}{rgb}{0.8, 0.2, 0.2}
\definecolor{brown}{rgb}{0.5, 0.16, 0.16}

\title[components]{Exact three-component covers in 2-coloured random bipartite graphs}

\author{Xiao-Chuan Liu}
\address[Liu]{Departamento de Matemática,
 Universidade Federal de Pernambuco,
	Avenida Jornalista Aníbal Fernandes, Cidade Universitária, Recife, Brasil}
\email{xiaochuan.liu@ufpe.br}

\author{Xu Yang}
\address[Yang]{Instituto de Computação,  Universidade Federal de Alagoas,
	Av. Lourival Melo Mota, S/N, Maceió, 57072-900, Brasil}
\email{yang@ic.ufal.br}

\begin{document}
\maketitle

\begin{abstract}
We resolve the two-colour three-component conjecture of Fernández,
Pavez-Signé and Stein for random bipartite graphs. More precisely, we prove
that if \(G\sim G(n,n,p)\) and \(p\gg\sqrt{\log n/n}\), then with high
probability every red--blue edge-colouring of \(G\) admits a cover of its
vertex set by at most three monochromatic connected components. The proof is
based on a uniform expansion lemma for unions of common neighbourhoods and an alternating common-neighbourhood expansion argument.
\end{abstract}

\section{Introduction}

In Ramsey-type partition and covering questions, one asks: given a host graph
\(G\) whose edges are coloured in \(r\) colours, how few monochromatic
connected subgraphs suffice to cover, or partition, its vertex set? In dense
deterministic settings, this line connects to classical conjectures such as
those of Erdős--Gyárfás--Pyber and generalizations of Ryser--Lovász, especially
for complete or complete bipartite host graphs.

In the random graph setting, Bal and DeBiasio \cite{BalDeBiasio17}
initiated the probabilistic analogue of the two-colour edge-colouring
problem. They proved that if
\(p\ge (27\log n/n)^{1/3}\), then asymptotically almost surely every
red--blue edge-colouring of \(G(n,p)\) admits a partition of its vertex set
into two monochromatic connected components. On the other hand, they showed
that if \(p\ll (2\log n/n)^{1/2}\), then asymptotically almost surely there
exists a red--blue edge-colouring of \(G(n,p)\) for which no bounded number
of monochromatic components covers all vertices. They conjectured that the
threshold for the two-colour partition property is of order
\((\log n/n)^{1/2}\).

Kohayakawa, Mota and Schacht \cite{KohayakawaMotaSchacht16}
resolved this conjecture in the two-colour case. They proved that
\(p(n)=\sqrt{\log n/n}\) is the threshold for the property that,
asymptotically almost surely, every red--blue edge-colouring of \(G(n,p)\)
admits a partition of \(V(G(n,p))\) into two vertex-disjoint monochromatic
trees.

In the bipartite random graph model, Fernández, Pavez-Signé and Stein
\cite{FernandezPavezStein26} proposed the corresponding exact
three-component conjecture for \(G(n,n,p)\): namely, that
\(p(n)=\sqrt{\log n/n}\) is the threshold for the property that,
asymptotically almost surely, every red--blue edge-colouring of \(G(n,n,p)\)
admits a cover of \(V(G(n,n,p))\) by at most three monochromatic connected
components.

They proved an approximate form of this conjecture: if
\(p\gg \sqrt{\log n/n}\), then asymptotically almost surely every red--blue
edge-colouring of \(G(n,n,p)\) admits a cover of all but \(O(1/p)\) vertices
by at most three vertex-disjoint monochromatic connected components. They
also proved the corresponding lower bound: for some constant \(c>0\), if
\(p\le c\sqrt{\log n/n}\), then asymptotically almost surely
\(tc_2(G(n,n,p))\ge 4\). Our main result removes the exceptional set in the
upper range, thereby establishing the exact three-component cover above the
conjectured threshold.

\begin{theorem}\label{thm:three-component-bipartite}
Let \(p=p(n)\) satisfy \(p \gg \sqrt{\log n/n}\), and let
\(G \sim G(n,n,p)\). Then asymptotically almost surely, every red--blue
edge-colouring of \(G\) admits a cover of its vertex set by at most three
monochromatic connected components.
\end{theorem}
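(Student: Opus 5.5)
We derive the theorem from a deterministic statement about any red--blue colouring of a graph with the following typical properties of $G(n,n,p)$, $p\gg\sqrt{\log n/n}$. Let $V(G)=A\cup B$. Every vertex has degree $(1\pm o(1))pn$. Any two vertices on the same side have $(1\pm o(1))p^2n$ common neighbours, and $p^2n\gg\log n$. The key extra ingredient is a \emph{uniform expansion property for unions of common neighbourhoods}: for every set $S$ of pairs on one side, the union $\bigcup_{\{x,y\}\in S}N(x)\cap N(y)$ has size at least $\min\{c|S|p^2n,\,(1-\varepsilon)n\}$, up to overlaps controlled by edge counts. Equivalently, no set $U$ of size $o(n)$ on one side can capture the common neighbourhoods of many pairs. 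All of these follow from Chernoff bounds and a union bound over sets of size at most $K/p$. This is where $p^2 n\gg \log n$ is used: sets of size about $1/p$ number $\binom{n}{1/p}\le e^{(\log n)/p}$, which is beaten by $e^{-\Omega(p n\cdot |S|p)}$.

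Fix a colouring. For a vertex $v\in A$ and its neighbourhood, the red and blue neighbourhoods partition $N(v)$. For two vertices $x,y\in A$, every common neighbour $z$ joins $x$ and $y$ by a path of length $2$ in one of three colour patterns: both red (same red component), both blue (same blue component), or mixed. Define an auxiliary $2$-colouring (plus "mixed") of the pairs of $A$. Since each pair has $\gg\log n$ common neighbours, pigeonhole on the colour types of the paths shows the following. For every pair $x,y$, either they lie in a common monochromatic component, or $N(x)\cap N(y)$ is split into $z$'s with red to $x$ and blue to $y$ and vice versa. From this we follow the approximate result of Fernández--Pavez-Signé--Stein and obtain three monochromatic components $C_1,C_2,C_3$ covering all but a set $X$ with $|X|=O(1/p)$. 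We also get structural information: typically one colour, say red, has a giant component $R$ meeting both sides in $(1-o(1))n$ vertices, and the leftover structure is governed by at most two blue components.

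The main step is to absorb $X$ by an \emph{alternating common-neighbourhood expansion}. Take $v\in X\cap A$. Its red neighbours and blue neighbours together number $\sim pn$, and at least half, say $\ge pn/2$, have one colour. Suppose $v$ has many red neighbours $N_r(v)$. By uniform expansion, $N_r(v)$ cannot avoid $R$: if $N_r(v)\cap R=\emptyset$, then all those vertices lie in $X$ or in the blue components. Then the common neighbourhoods of pairs from $N_r(v)\cup\{v\}$ spread over almost all of $A$. Next we alternate: vertices in $B$ joined to $v$ in red are adjacent, via common neighbourhoods, to $(1-o(1))n$ vertices of $A$. Each such edge is red, so merging into $R$, or blue, so forcing a blue star. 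Iterating once more shows that $v$ in fact joins $R$ or one of the two blue components. Otherwise, exceptional vertices would together form a set $U$ with $o(n)$ size capturing $\Omega(|U|\,p^2n)$ common neighbourhoods, contradicting uniform expansion. The vertices of $X$ that are in none of $C_1,C_2,C_3$ would have to be simultaneously non-red to $R$ and non-blue to the blue components on almost all of their $\sim pn$ neighbourhood, which is impossible.

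I expect the main obstacle to be the case analysis when the three components are not "one giant red plus two blue". In the other configurations (e.g. two red and one blue, or components split across sides), absorbing $X$ may require re-choosing the components rather than just enlarging them. My first attempt is to prove a rigidity lemma: for the final cover, each colour class is near-complete bipartite between large sets, determined by majority colours of common neighbourhoods. Then I would show that any vertex failing to join a component yields, via two rounds of alternating expansion, a set violating the uniform expansion property. The probabilistic part, meaning the union bound for the expansion lemma at sets of size up to $K/p$, should be routine given $p\gg\sqrt{\log n/n}$.
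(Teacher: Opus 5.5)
There is a genuine gap. The proposal is a plan, and the step that carries the theorem is not supplied.

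\textbf{The structural assumption is false.} After citing the approximate result of Fern\'andez--Pavez-Sign\'e--Stein, you assume that one colour has a giant component meeting both sides in $(1-o(1))n$ vertices, with the rest governed by two components of the other colour. This fails in general. Split $A=A'\sqcup A''$ and $B=B'\sqcup B''$ into halves, and colour $G[A',B']\cup G[A'',B'']$ blue and $G[A',B'']\cup G[A'',B']$ red. Then all four monochromatic components have both sides of size $n/2$, since each block is connected w.h.p.

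Four-block configurations of this kind are exactly the paper's Case~3. There one has a blue $C$ on $A_1\cup B_1$, and red components $R\supseteq A_2$ and $R'\supseteq B_2$ containing all but $O(\sqrt{n\log n})$ vertices of $B_1$ and of $A_1$ respectively. There is also a blue $R''$ on most of $A_2\cup B_2$. The final cover is $\{R,R',R''\}$ or $\{C,R,R''\}$, depending on which exceptional sets are empty. So it is not obtained by enlarging a fixed triple. You concede that this case analysis is still open.

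\textbf{The absorption step has no mechanism.} Phrases such as ``common neighbourhoods of pairs from $N_r(v)\cup\{v\}$ spread over almost all of $A$'' and ``iterating once more shows $v$ joins $R$'' never say which pairs have their common neighbourhoods confined to which small set. In the paper this is explicit:
\begin{itemize}
\item $N(p)\cap N(x)\subseteq Q_1\cup Y$ for $p\in P_1$ and $x\in X$;
\item $N(q)\cap N(y)\subseteq P_2\cup X$ for $q\in Q_2$ and $y\in Y$.
\end{itemize}
Both hold because any other common neighbour would force a vertex into a component it is known to avoid. The star expansion lemma, applied alternately on the two sides, then makes each exceptional set $\omega$ of the other, which is a contradiction. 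The leftover vertices are placed in three ways: degree counting gives $X\subseteq R'\cup R''$ and $Y\subseteq R\cup R''$; one codegree argument handles Case~(3.3.2); and in the near-spanning case, Lemma~\ref{lem:complete-graph-two-colour} is applied to the auxiliary complete graph on the leftover set whose edges record ``red-connected'' or ``blue-connected''.

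\textbf{Your expansion property is false as stated.} Take a star of pairs $\{x',x_i\}$, $i\le m$. The union of common neighbourhoods lies inside $N(x')$, which has size about $pn$. This is far below $\min\{cmp^2n,(1-\varepsilon)n\}$ once $m\gg 1/p$ and $p=o(1)$. The hedge ``up to overlaps controlled by edge counts'' is not a statement one can prove or use. What is true, and all that is needed, is the star version (Lemma~\ref{logn_expanding}): for $m\le\sqrt{n/\log n}$ the union has size $\omega(m\log n)$. This follows from Chernoff and a union bound over $n^{m+1}$ choices.

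\textbf{You have no tool for the macroscopic structure.} The paper does not use the approximate result at all. Instead it uses the fact that every pair $X\subseteq A$, $Y\subseteq B$ with $|X|,|Y|\ge c\sqrt{n\log n}$ spans an edge. Hence every such dense pair has a component missing only $O(\sqrt{n\log n})$ vertices. Start from a component with the largest larger side, and apply this to the pairs $(A_1^c,B_1)$ and $(A_1,B_1^c)$, which have no blue edges. This first yields a component with both sides of size at least $n/2-O(\sqrt{n\log n})$. It then yields $R,R',R''$ with exceptional sets of size $O(\sqrt{n\log n})$. That is precisely the range where the star lemma, with $m$ up to $\sqrt{n/\log n}$, gives a contradiction.
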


The proof has two main ingredients. First, we establish a uniform expansion
statement for unions of common neighbourhoods in the random bipartite graph.
Second, we use this expansion to route the small exceptional sets left after
applying large-component arguments, thereby obtaining an exact
three-component cover.

Let us also note that a simpler variant of the same method reproves the
corresponding upper-bound cover statement for ordinary random graphs: if
\(p\gg\sqrt{\log n/n}\), then asymptotically almost surely every red--blue
edge-colouring of \(G(n,p)\) admits a cover of its vertex set by two
monochromatic connected components. We omit the details.

The remainder of the paper is organized as follows. In Section~2 we collect
the probabilistic tools used throughout the proof. Section~3 proves the main
expansion and connectivity lemmas. Section~4 completes the proof of
Theorem~\ref{thm:three-component-bipartite}.

\section{Preliminaries}
Throughout the paper, we omit floor and ceiling signs whenever they have no
effect on the argument.
\begin{lemma}\label{lem:complete-graph-two-colour}
Let the edges of \(K_n\) be coloured red and blue. Then \(V(K_n)\)
is contained in a single monochromatic connected component.
\end{lemma}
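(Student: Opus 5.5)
The approach is the classical observation that a graph or its complement is connected. Here the red subgraph \(R\) and the blue subgraph \(B\) on vertex set \(V(K_n)\) are complementary. So it suffices to show that if \(R\) is disconnected, then \(B\) is connected (and spans all vertices). Then \(V(K_n)\) lies in one red component or in one blue component. We treat the trivial case \(n=1\) separately: the single vertex is itself a (degenerate) monochromatic component of either colour.

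The argument runs as follows. Suppose the red graph \(R\) on \(V(K_n)\) is not connected. Choose a red component \(C\), so that \(\emptyset\neq C\neq V(K_n)\), and set \(D=V(K_n)\setminus C\). No edge between \(C\) and \(D\) is red, since otherwise \(C\) would not be a whole component. Because the host graph is complete, every pair \(\{x,y\}\) with \(x\in C\) and \(y\in D\) is therefore a blue edge. Hence the blue graph contains the complete bipartite graph between \(C\) and \(D\), with both sides nonempty.

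It remains to check that this complete bipartite graph is connected. Take any two vertices \(u,v\). If they lie on opposite sides, they are joined directly by a blue edge. If both lie in \(C\), pick any \(w\in D\); then \(u\,w\,v\) is a blue path. The case where both lie in \(D\) is symmetric, using a vertex of \(C\). Thus all of \(V(K_n)\) lies in a single blue component. If instead \(R\) is connected, all of \(V(K_n)\) lies in a single red component.

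There is no genuine obstacle here. The only point requiring care is the convention for isolated vertices, namely treating a single vertex as a monochromatic component. This matters only in the degenerate case \(n=1\), and it is consistent with the covering notion used in Theorem~\ref{thm:three-component-bipartite}.
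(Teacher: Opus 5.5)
Your proof is correct and is essentially the paper's argument: take a red component that is not everything, note that every edge from it to the rest must be blue, so the blue graph contains a spanning complete bipartite graph with both sides nonempty and is therefore connected. You are slightly more explicit than the paper in checking that this complete bipartite graph is connected and in dealing with the degenerate case $n=1$.
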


\begin{proof}
Choose a maximal red component $R$. If $R=V(K_n)$ we are done. Otherwise, each $y\in V\setminus R$ cannot have red edges into $R$, so all its edges to $R$ are blue. Thus every such $y$ lies in the same blue component as $R$, so $V$ is covered by a single blue component.
\end{proof}

\begin{lemma}[Degree and codegree]\label{degree_codegree}
Let \(G\sim G(n,n,p)\) be the random bipartite graph with parts \(A\) and
\(B\), where \(|A|=|B|=n\), and let \(p=p(n)\) satisfy
\(p\gg \sqrt{\log n/n}\). Then, with high probability:
\begin{enumerate}
    \item \textbf{Degrees.} Every vertex \(v\) satisfies
    \[
      \deg(v) \in \big[(1-\tfrac12)pn,\,(1+\tfrac12)pn\big].
    \]
    In particular, \(\deg(v)\ge \tfrac12 pn \gg \sqrt{n\log n}\).

    \item \textbf{Common neighbourhoods.} For every two distinct vertices
    \(u,v\) on the same side of the bipartition,
    \[
      |N(u)\cap N(v)|
      \in \big[(1-\tfrac12)p^2 n,\,(1+\tfrac12)p^2 n\big].
    \]
    In particular,
    \[
      |N(u)\cap N(v)|\ge \tfrac12 p^2 n \gg \log n.
    \]
\end{enumerate}
\end{lemma}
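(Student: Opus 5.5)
The plan is a direct application of the Chernoff bound followed by a union bound, done separately for degrees and for codegrees. We use the multiplicative Chernoff inequality in the form
\[
\Pr\big(|X-\mu|\ge \varepsilon\mu\big)\le 2\exp\!\big(-\varepsilon^2\mu/3\big), \qquad 0<\varepsilon\le 1,
\]
valid for \(X\) a sum of independent Bernoulli variables with \(\mathbb{E}X=\mu\). We apply it with \(\varepsilon=\tfrac12\).

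\textbf{Degrees.} Fix a vertex \(v\), say \(v\in A\). Then \(\deg(v)=\sum_{b\in B}\mathds{1}[vb\in E(G)]\sim\mathrm{Bin}(n,p)\), with mean \(\mu=pn\). Chernoff gives \(\Pr(|\deg(v)-pn|\ge \tfrac12 pn)\le 2e^{-pn/12}\). There are \(2n\) vertices, so the union bound yields failure probability at most \(4n e^{-pn/12}\). Since \(p\gg\sqrt{\log n/n}\), we have \(pn\gg\sqrt{n\log n}\gg\log n\). The exponent \(pn/12\) therefore exceeds, say, \(2\log n\) for large \(n\), and the bound is \(o(1)\). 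The consequence \(\tfrac12 pn\gg\sqrt{n\log n}\) is immediate.

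\textbf{Codegrees.} Fix distinct \(u,v\in A\) (the case \(u,v\in B\) is symmetric). Then
\[
|N(u)\cap N(v)|=\sum_{b\in B}\mathds{1}[ub\in E]\,\mathds{1}[vb\in E].
\]
The summands are independent across \(b\), since they involve disjoint edge pairs, and each has success probability \(p^2\). Hence \(|N(u)\cap N(v)|\sim\mathrm{Bin}(n,p^2)\) with \(\mu=p^2n\). Chernoff gives failure probability at most \(2e^{-p^2n/12}\). A union bound over at most \(2\binom n2\le n^2\) pairs gives total failure probability at most \(2n^2e^{-p^2n/12}\).

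The only delicate point is that this must be \(o(1)\), and that is exactly where the hypothesis enters. From \(p\gg\sqrt{\log n/n}\) we get \(p^2n=\omega(\log n)\), so \(p^2n/12\ge 3\log n\) for large \(n\). The bound then becomes at most \(2n^{-1}=o(1)\). The same fact gives \(\tfrac12p^2n\gg\log n\). Finally, a union bound over the two failure events proves both parts simultaneously with high probability.

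We also note that the argument does not need \(p\) bounded away from \(1\). If \(p\) is close to \(1\), all the bounds still hold.
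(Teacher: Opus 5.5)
Your proposal is correct and follows essentially the same route as the paper. Both arguments apply Chernoff to $\mathrm{Bin}(n,p)$ and $\mathrm{Bin}(n,p^2)$, then take union bounds over $2n$ vertices and $O(n^2)$ pairs, with the hypothesis entering through $p^2n=\omega(\log n)$; the only cosmetic difference is that you use the two-sided bound with explicit constants, whereas the paper writes out the lower tail and calls the upper tail analogous.
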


\begin{proof}
For any fixed vertex $v\in A$ (or $B$), $\deg(v)\sim \mathrm{Bin}(n,p)$. By the Chernoff bound,
\[
\mathbb{P}\!\big(\deg(v) < (1-\tfrac12)pn\big) \le \exp(-pn/8).
\]
A union bound over all $2n$ vertices shows that w.h.p.\ no vertex violates this as soon as $pn \gg \log n$. Our assumption $p\gg \sqrt{\log n/n}$ implies $pn \gg \sqrt{n\log n}\gg \log n$, so (1) holds; the upper tail is analogous.

For (2), fix distinct $u,v$ on the same side. Their common neighbourhood size
$|N(u)\cap N(v)|\sim \mathrm{Bin}(n,p^2)$, and Chernoff gives
\[
\mathbb{P}\!\big(|N(u)\cap N(v)| < (1-\tfrac12)p^2 n\big) \le \exp(-p^2 n/8).
\]
There are $2\binom{n}{2}=O(n^2)$ such pairs; by a union bound, w.h.p.\ all pairs satisfy the bound provided $p^2 n \gg \log n^2 \asymp \log n$. This is exactly our regime $p\gg \sqrt{\log n/n}$. The upper tail is handled similarly.
\end{proof}

\begin{lemma}[Uniform \(\log n\)-expansion]\label{logn_expanding}
Let \(p=p(n)\gg \sqrt{\log n/n}\), and let
\(G\sim G(n,n,p)\) be the random bipartite graph with parts \(A\) and \(B\).
Then, with high probability, the following holds simultaneously.

For every integer \(m\) with \(1\le m\le \sqrt{n/\log n}\), and for every
choice of distinct vertices \(x',x_1,\ldots,x_m\) all lying in the same part
of the bipartition, if
\[
U:=\bigcup_{i=1}^m \bigl(N(x')\cap N(x_i)\bigr),
\]
where the neighbourhoods are taken in the opposite part, then
\[
|U|=\omega(m\log n).
\]
\end{lemma}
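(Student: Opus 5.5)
The plan is to fix the tuple, expose the neighbourhood of \(x'\) first, and then observe that \(|U|\) is a binomial random variable whose mean is \(\omega(m\log n)\) uniformly in \(m\); a Chernoff bound with a union bound over at most \(n^{m+1}\) tuples then finishes. Write \(d:=\deg(x')\) and \(\varepsilon(n):=\log n/(p^2 n)\to 0\), so that \(p^2n=\log n/\varepsilon\).

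\textbf{Step 1 (conditioning on \(N(x')\)).} Fix \(m\) and a tuple \(x',x_1,\dots,x_m\) in, say, \(A\). The edges between \(x'\) and \(B\) are independent of the edges between \(\{x_1,\dots,x_m\}\) and \(B\). Condition on \(N(x')\) with \(d\ge pn/2\). By Lemma~\ref{degree_codegree}(1) this degree bound holds simultaneously for all vertices with high probability, so it costs only a global failure event.

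Given \(N(x')\), a vertex \(b\in N(x')\) lies in \(U\) if and only if it is adjacent to at least one \(x_i\). These events are independent over \(b\), each with probability \(q:=1-(1-p)^m\). Hence \(|U|\sim\mathrm{Bin}(d,q)\).

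\textbf{Step 2 (the mean).} Use \(1-(1-p)^m\ge 1-e^{-pm}\ge \tfrac12\min(pm,1)\).

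If \(pm\le1\), then
\[
\mathbb{E}|U|\ge \tfrac{pn}{2}\cdot\tfrac{pm}{2}=\tfrac14 p^2nm=\tfrac{1}{4\varepsilon}\,m\log n .
\]
If \(pm>1\), then \(\mathbb{E}|U|\ge pn/4\). Since \(m\le\sqrt{n/\log n}\) gives \(m\log n\le\sqrt{n\log n}\), and \(pn=\sqrt{n\log n/\varepsilon}\), we get
\[
\mathbb{E}|U|\ge \tfrac{1}{4\sqrt{\varepsilon}}\,m\log n .
\]
In both cases \(\mu:=\mathbb{E}|U|\ge \tfrac{1}{4\sqrt\varepsilon}\,m\log n\) for large \(n\), using \(\varepsilon\le\sqrt\varepsilon\). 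Writing \(K:=1/(4\sqrt\varepsilon)\to\infty\), this reads \(\mu\ge K m\log n\).

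\textbf{Step 3 (Chernoff and union bound).} The Chernoff lower tail gives
\[
\mathbb{P}\big(|U|<\mu/2\big)\le e^{-\mu/8}\le n^{-Km/8}.
\]
The number of tuples for a given \(m\) is at most \(2n^{m+1}\le 2n^{2m}\). Summing over \(m\le\sqrt{n/\log n}\), the total failure probability is at most
\[
\sum_{m\ge1}2n^{2m-Km/8}=o(1),
\]
since \(K\to\infty\). Adding the failure probability of Lemma~\ref{degree_codegree}, with high probability every admissible tuple satisfies
\[
|U|\ge \tfrac{K}{2}\,m\log n=\omega(m\log n),
\]
with the \(\omega\) uniform in \(m\) and in the tuple.

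\textbf{Main obstacle.} The delicate point is Step 2: the mean must be \(\omega(m\log n)\) uniformly across the whole range of \(m\). When \(pm\) is small this follows from the codegree regime \(p^2n\gg\log n\). When \(pm\) is large, \(|U|\) saturates at roughly \(|N(x')|\), and it is precisely the restriction \(m\le\sqrt{n/\log n}\) that keeps \(m\log n\ll pn\). A secondary point is the conditioning on \(d\ge pn/2\). It is handled either by the independence of the edges at \(x'\) from the other edges, as above, or simply by applying Chernoff to the unconditional \(\mathrm{Bin}(n,pq)\) count \(|U|\) directly.
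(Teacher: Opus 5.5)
Your proof is correct and follows essentially the same route as the paper: you write $|U|$ as a binomial count, split into the regimes $pm\le 1$ and $pm>1$ to get a mean of $\omega(m\log n)$ uniformly for $m\le\sqrt{n/\log n}$, and finish with Chernoff and a union bound over at most $2n^{m+1}$ tuples. The only cosmetic difference is that you condition on $N(x')$, using the degree bound of Lemma~\ref{degree_codegree}, to get $\mathrm{Bin}(d,1-(1-p)^m)$. The paper instead works directly with the unconditional $\mathrm{Bin}(n,p(1-(1-p)^m))$, which is the alternative you mention at the end and which avoids the extra global failure event.
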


\begin{proof}
First choose $x',x_1,\dots,x_m\in A$ with $1\le m\le \sqrt{n/\log n}$ fixed, and for each $b\in B$ let
$I_b=\mathbf 1_{\{b\in U\}}$. 
The edge indicators of $\{bx',bx_i \,\big| \, i\in[m]\}$ are independent $\mathrm{Ber}(p)$ random variables;  for distinct $b$ these sets are disjoint, hence
$\{I_b\}_{b\in B}$ are i.i.d. with
\begin{equation} 
\mathbb{P}(I_b=1)=q:=p\big(1-(1-p)^m\big).
\end{equation}
Therefore $|U|=\sum_{b\in B} I_b\sim\mathrm{Bin}(n,q)$ and $\mathbb{E}[\,|U|\,]=nq$.

By assumption, we may write
\[
p=\alpha(n)\sqrt{\frac{\log n}{n}},
\]
where \(\alpha(n)\to\infty\). We consider two regimes.

\smallskip
\emph{Case 1: $pm\le 1$.}
Using $1-(1-p)^m \ge pm-\frac12 p^2m^2$,
\begin{equation}
q \;\ge\; p\Big(pm-\tfrac12 p^2m^2\Big)
\;=\; mp^2\Big(1-\tfrac12 pm\Big)
\;\ge\; \tfrac12\,mp^2,
\end{equation}
hence
\begin{equation}
\mathbb{E}[\,|U|\,] \;\ge\; \tfrac12\,mp^2 n
\;=\; \tfrac12\, m\,\alpha(n)^2\log n
\;=\; \omega(m\log n).
\end{equation}
By Chernoff’s lower tail with $\varepsilon=\tfrac12$,
\begin{equation}
\mathbb{P}\Big(|U|<\tfrac12\,\mathbb{E}[\,|U|\,]\Big)
\;\le\; \exp\!\Big(-\tfrac{\mathbb{E}[\,|U|\,]}{8}\Big)
\;\le\; \exp\!\big(-\Omega(mp^2 n)\big)
\;=\; \exp\!\big(-\Omega(\alpha(n)^2\,m\log n)\big).
\end{equation}

\smallskip
\emph{Case 2: $pm>1$.}
Since $(1-p)^m\le e^{-pm}$,
\begin{equation}
q \;\ge\; p\big(1-e^{-pm}\big) \;\ge\; (1-e^{-1})\,p,
\end{equation}
and thus
\begin{equation}
\mathbb{E}[\,|U|\,] \;\ge\; (1-e^{-1})\,np
\;=\; (1-e^{-1})\,\alpha(n)\sqrt{n\log n}.
\end{equation}
Because $m\le \sqrt{n/\log n}$, we have $\mathbb{E}[\,|U|\,]=\omega(m\log n)$. Chernoff again gives
\begin{equation}
\mathbb{P}\Big(|U|<\tfrac12\,\mathbb{E}[\,|U|\,]\Big)
\;\le\; \exp\!\Big(-\tfrac{\mathbb{E}[\,|U|\,]}{8}\Big)
\;=\; \exp\!\big(-\omega(m\log n)\big).
\end{equation}

In either case, there exists an absolute constant \(c>0\) such that,
uniformly over all admissible \(m\) and all fixed choices of
\(x',x_1,\ldots,x_m\),
\begin{equation}\label{eq:uniform-expectation}
\mathbb E|U|
\ge c\alpha(n)m\log n,
\end{equation}
where
\[
p=\alpha(n)\sqrt{\frac{\log n}{n}}
\qquad\text{and}\qquad
\alpha(n)\to\infty.
\]
Consequently, by Chernoff's inequality, after decreasing \(c\) if necessary,
\begin{equation}\label{eq:uniform-expansion-tail}
\mathbb P\left(
|U|<\frac12\mathbb E|U|
\right)
\le
\exp\bigl(-c\alpha(n)m\log n\bigr).
\end{equation}
Moreover, on the complementary event,
\[
|U|
\ge \frac{c}{2}\alpha(n)m\log n
=\omega(m\log n).
\]

For each fixed \(m\), there are at most
\[
2n\binom{n-1}{m}\le 2n^{m+1}
\]
choices of the bipartition class and of the distinct vertices
\(x',x_1,\ldots,x_m\). Hence, by a union bound, the probability that the
desired conclusion fails for at least one admissible choice is at most
\begin{align*}
\sum_{m=1}^{\sqrt{n/\log n}}
2n^{m+1}\exp\bigl(-c\alpha(n)m\log n\bigr)
&\le
2\sum_{m\ge1}
\exp\bigl(-(c\alpha(n)-2)m\log n\bigr)\\
&=o(1),
\end{align*}
since \(\alpha(n)\to\infty\). This proves the lemma.
\end{proof}

\section{Proof of the Theorem}

\begin{prop}[No large crossings]\label{no_large_crossings}
Fix a constant \(c>0\) and let \(p=p(n)\) satisfy
\(p\gg \sqrt{\log n/n}\). Let \(G\sim G(n,n,p)\) be the random bipartite
graph with parts \(A\) and \(B\), where \(|A|=|B|=n\). Then, with high
probability, for every \(X\subseteq A\) and \(Y\subseteq B\) with
\[
|X|,|Y|\ge c\sqrt{n\log n},
\]
we have
\[
e_G(X,Y)\ge 1.
\]
\end{prop}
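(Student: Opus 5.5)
We prove this with a first-moment union bound over pairs of sets. First we reduce to sets of one fixed size. If \(X,Y\) satisfy the hypothesis, choose subsets \(X_0\subseteq X\), \(Y_0\subseteq Y\) with \(|X_0|=|Y_0|=k:=\lceil c\sqrt{n\log n}\rceil\). Then \(e_G(X,Y)\ge e_G(X_0,Y_0)\), so it suffices to show that w.h.p.\ every pair \(X_0\subseteq A\), \(Y_0\subseteq B\) with \(|X_0|=|Y_0|=k\) spans at least one edge.

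Now fix such a pair. Its \(k^2\) potential edges are independent \(\mathrm{Ber}(p)\) variables, so
\[
\mathbb P\bigl(e_G(X_0,Y_0)=0\bigr)=(1-p)^{k^2}\le \exp(-pk^2).
\]
The number of choices of \((X_0,Y_0)\) is
\[
\binom nk^2\le \Bigl(\frac{en}{k}\Bigr)^{2k}\le \exp(2k\log n).
\]
By the union bound, the failure probability is therefore at most
\[
\exp\bigl(2k\log n-pk^2\bigr)=\exp\bigl(-k(pk-2\log n)\bigr).
\]

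The key step is to check that \(pk\gg\log n\). Writing \(p=\alpha(n)\sqrt{\log n/n}\) with \(\alpha(n)\to\infty\), as in the proof of Lemma~\ref{logn_expanding}, we get
\[
pk\ge \alpha(n)\sqrt{\tfrac{\log n}{n}}\cdot c\sqrt{n\log n}=c\,\alpha(n)\log n .
\]
For large \(n\) this exceeds \(3\log n\), since \(c\) is fixed and \(\alpha(n)\to\infty\). The failure probability is then at most \(\exp(-k\log n)=o(1)\), which proves the proposition.

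The only point needing care is this comparison between the entropy term \(2k\log n\) and the exponent \(pk^2\). It is exactly where the hypothesis \(p\gg\sqrt{\log n/n}\) is used, and why the size threshold is of order \(\sqrt{n\log n}\). Reducing to sets of exactly size \(k\) avoids summing over all sizes, though such a sum would also converge. The remaining estimates are routine.
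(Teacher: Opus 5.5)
Your proof is correct. Its core is the same first-moment union bound as the paper's: the $\binom{n}{k}^2\le e^{2k\log n}$ choices of sets are beaten by the probability $(1-p)^{k^2}\le e^{-pk^2}$ of an empty pair, because $pk\ge c\,\alpha(n)\log n\gg\log n$.

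The difference is in how the union bound is organised. The paper takes the union bound over all pairs of sizes $x\ge y\ge m$ at once. That forces its Steps 1--2: it must verify $py\ge\log(n/y)$ for every $y\ge m$, so that $x\mapsto x\log(en/x)-pxy$ is nonincreasing on $[y,n]$ and the worst term lies on the diagonal $x=y$. Only then does it sum along the diagonal, paying an extra factor $n$ for the choice of $x$.

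You instead observe that an edgeless pair $(X,Y)$ contains an edgeless pair $(X_0,Y_0)$ with both sides of size exactly $k$. This reduces everything to a single term, and your inequality $pk-2\log n\ge\log n$ is essentially the estimate in the paper's Step 3.

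Since the property is inherited by subsets, the paper's size-by-size bookkeeping gains nothing for this statement. Your version is the shorter proof of the same bound.
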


\begin{proof}
For subsets \(X\subseteq A\), \(Y\subseteq B\), let \(E(X,Y)\) denote the set
of edges with one endpoint in \(X\) and the other in \(Y\). Set
\(m=\lceil c\sqrt{n\log n}\rceil\). For any integers \(x,y\ge m\) and any
two sets \(X\subseteq A\), \(Y\subseteq B\) with \(|X|=x\), \(|Y|=y\), we have
\[
\mathbb P\big[E(X,Y)=\emptyset\big]=(1-p)^{xy}\le \exp(-pxy).
\]
A union bound, using symmetry between the two parts, gives
\begin{align}\label{eq:UB}
&\mathbb P\big( \text{there exist } X\subseteq A,\ Y\subseteq B
\text{ with } |X|,|Y|\ge m,\ E(X,Y)=\emptyset\big)\\
\le\;&
2\sum_{y=m}^{n}\sum_{x=y}^{n}
\binom{n}{x}\binom{n}{y}\exp(-pxy).
\nonumber
\end{align}
Below we estimate~\eqref{eq:UB}. For convenience, define
\begin{equation}
F(x,y)
:= \log\binom{n}{x}+\log\binom{n}{y}-p\,xy.
\end{equation}
Using \(\binom{n}{k}\le (en/k)^k\), we have
\begin{equation}\label{eq:F-bound}
F(x,y)
\le x\log\frac{en}{x}+y\log\frac{en}{y}-p\,xy.
\end{equation}

\medskip\noindent
\textbf{Step 1: For fixed \(y\), the exponent is maximized at \(x=y\).}
We consider
\begin{equation}
\phi_y(x):=x\log\frac{en}{x}-p\,xy \qquad (x\in[y,n]).
\end{equation}
Then
\begin{equation}
\phi_y'(x)=\log\frac{n}{x}-p\,y.
\end{equation}
Hence, for all \(x\ge y\),
\begin{equation}
\phi_y'(x)\le \log\frac{n}{y}-p\,y.
\end{equation}
If \(p\,y\ge \log(n/y)\), then \(\phi_y\) is nonincreasing on \([y,n]\). Therefore
\begin{equation}
\max_{x\in[y,n]}\left\{x\log\frac{en}{x}-p\,xy\right\}
= y\log\frac{en}{y}-p\,y^2.
\end{equation}
Consequently, under the condition \(p\,y\ge \log(n/y)\),
\begin{equation}\label{eq:max-at-diagonal}
\max_{x\ge y} F(x,y)
\le 2y\log\frac{en}{y}-p\,y^2.
\end{equation}

\medskip\noindent
\textbf{Step 2: For any fixed $c>0$, the choice $m=c\sqrt{n\log n}$ ensures $p\,y \ge \log\!\tfrac{n}{y}$ for all $y\ge m$.}
Write $p=\alpha(n)\sqrt{\tfrac{\log n}{n}}$ with $\alpha(n)\to\infty$ (since $p\gg \sqrt{\tfrac{\log n}{n}}$), and fix $c>0$.
Since $m=c\sqrt{n\log n}$,
for every $y\ge m$ we have
\begin{equation} 
p\,y \ \ge\ p\,m \ =\ \alpha(n)\,c\,\log n.
\end{equation}
Moreover, the function $y\mapsto \log\!\tfrac{n}{y}$ is decreasing on $[m,n]$, so
\begin{equation}
\log\!\tfrac{n}{y} \ \le\ \log\!\tfrac{n}{m}
= \log\!\Big(\frac{\sqrt{n}}{c\sqrt{\log n}}\Big)
= \tfrac12\log n - \tfrac12\log\log n - \log c
\ \le\ \tfrac12\log n,
\end{equation}
for all large $n$. 
Since $\alpha(n)\to\infty$, we have
$p\,y \ \ge\ \alpha(n)c\,\log n \ \ge\ \tfrac12\log n \ \ge\ \log\!\tfrac{n}{y}$
for all $y\ge m$ and all sufficiently large $n$. 
Therefore the condition
\(p\,y\ge \log(n/y)\) holds throughout the range \(y\ge m\), and hence
\eqref{eq:max-at-diagonal} applies.

\medskip
\noindent\textbf{Step 3. Summation along the diagonal \(x=y\).}
Combining \eqref{eq:UB} and \eqref{eq:max-at-diagonal}, and using that for
each fixed \(y\) there are at most \(n\) possible values of \(x\), we obtain
\begin{align}\label{eq:sum-diagonal}
&\mathbb P\big( \text{there exist } X\subseteq A,\ Y\subseteq B
\text{ with } |X|,|Y|\ge m,\ E(X,Y)=\emptyset\big)\\
\le\;&
2n\sum_{y=m}^{n}
\exp\!\Big(2y\log\tfrac{en}{y}-p\,y^2\Big).
\nonumber
\end{align}
For \(y\ge m\), write
\begin{equation}
2y\log\tfrac{en}{y}-p\,y^2
=
-y\left(p\,y-2\log\tfrac{en}{y}\right).
\end{equation}
By Step~2, \(p\,y\ge c\alpha(n)\log n\), where \(\alpha(n)\to\infty\). Moreover,
\begin{equation}
2\log\tfrac{en}{y}
\le
2\log\tfrac{en}{m}
=O(\log n).
\end{equation}
Hence, uniformly for all \(y\ge m\),
\begin{equation}
p\,y-2\log\tfrac{en}{y}
\ge \tfrac12 c\alpha(n)\log n
\end{equation}
for all sufficiently large \(n\). Therefore
\begin{equation}
2y\log\tfrac{en}{y}-p\,y^2
\le
-\tfrac12 c\alpha(n)y\log n
\le
-\tfrac12 c\alpha(n)m\log n
=
-\omega(m\log n).
\end{equation}
Consequently,
\begin{equation}
2n\sum_{y=m}^{n}
\exp\!\Big(2y\log\tfrac{en}{y}-p\,y^2\Big)
\le
2n^2\exp\big(-\omega(m\log n)\big)
=o(1).
\end{equation}
Thus, with high probability, there are no subsets
\(X\subseteq A\) and \(Y\subseteq B\) with \(|X|,|Y|\ge m\)
such that \(E(X,Y)=\emptyset\).
This completes the proof of the proposition.
\end{proof}

\begin{coro}[Large connected component in every dense pair]\label{big_component}
Fix any small absolute constant \(c_0>0\), and assume
\(p \gg \sqrt{\tfrac{\log n}{n}}\). Set
\[
s_0 := 100\,c_0\sqrt{n\log n}.
\]
With high probability, the following holds. If \(G\sim G(n,n,p)\) has
bipartition \(A\cup B\) with \(|A|=|B|=n\), then for every
\(X\subseteq A\) and \(Y\subseteq B\) with \(|X|,|Y|\ge s_0\), the induced
bipartite subgraph \(G[X,Y]\) contains a connected component \(C\) such that
\begin{equation}\label{eq:almost-all-in-one}
|(X\cup Y)\setminus C|
\le
c_0\sqrt{n\log n}.
\end{equation}
\end{coro}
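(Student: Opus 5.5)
We will deduce the corollary deterministically from Proposition~\ref{no_large_crossings}, applied with the constant \(c:=c_0/2\) (any constant below \(c_0\) will do). Work on the high-probability event that every \(X'\subseteq A\), \(Y'\subseteq B\) with \(|X'|,|Y'|\ge \tfrac{c_0}{2}\sqrt{n\log n}\) span at least one edge. Fix \(X,Y\) with \(|X|,|Y|\ge s_0\), and write \(\mathcal C\) for the set of connected components of \(H:=G[X,Y]\). The point is that a component has no edges to the rest of \(H\), so a union of components can never meet two large sides at once.

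The first step is to show that there is a component \(C\) with \(|C\cap X|\) large, by a grouping argument. Suppose every component \(C\) satisfies \(|C\cap X|<\tfrac{c_0}{2}\sqrt{n\log n}\). Add components greedily into a family \(\mathcal S\) until \(|\bigcup_{\mathcal S}C\cap X|\) first reaches \(\tfrac{c_0}{2}\sqrt{n\log n}\). Then
\[
\Bigl|\bigcup_{\mathcal S}C\cap X\Bigr|\in\bigl[\tfrac{c_0}{2}\sqrt{n\log n},\,c_0\sqrt{n\log n}\bigr),
\]
so the complementary family \(\mathcal S^c\) still has \(X\)-part of size at least \(s_0-c_0\sqrt{n\log n}\ge \tfrac{c_0}{2}\sqrt{n\log n}\). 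Next compare the \(Y\)-parts of \(\mathcal S\) and \(\mathcal S^c\). If the \(Y\)-part of \(\mathcal S^c\) has size at least \(\tfrac{c_0}{2}\sqrt{n\log n}\), then the pair \((\bigcup_{\mathcal S}C\cap X,\ \bigcup_{\mathcal S^c}C\cap Y)\) has no edges, contradicting the proposition. Otherwise the \(Y\)-part of \(\mathcal S\) has size at least \(s_0-\tfrac{c_0}{2}\sqrt{n\log n}\), and then the pair \((\bigcup_{\mathcal S^c}C\cap X,\ \bigcup_{\mathcal S}C\cap Y)\) gives the contradiction. Hence some component \(C\) has \(|C\cap X|\ge \tfrac{c_0}{2}\sqrt{n\log n}\). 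The same argument with the roles of \(X\) and \(Y\) swapped gives a component with a large \(Y\)-part.

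The second step shows that \(C\) is almost everything. Since \(C\) is a component, \(C\cap X\) has no edges to \(Y\setminus C\), so by the proposition \(|Y\setminus C|<\tfrac{c_0}{2}\sqrt{n\log n}\). Consequently \(|C\cap Y|\ge s_0-\tfrac{c_0}{2}\sqrt{n\log n}\), which is large. Likewise \(C\cap Y\) has no edges to \(X\setminus C\), so \(|X\setminus C|<\tfrac{c_0}{2}\sqrt{n\log n}\). Adding the two bounds,
\[
|(X\cup Y)\setminus C|<c_0\sqrt{n\log n},
\]
which is \eqref{eq:almost-all-in-one}.

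The main obstacle is the bookkeeping in the greedy grouping of the first step: we must ensure that both \(\mathcal S\) and \(\mathcal S^c\) keep an \(X\)-part of size at least the threshold of Proposition~\ref{no_large_crossings}. This is why we use the constant \(c_0/2\) in the proposition and why the factor \(100\) in \(s_0\) leaves ample room. The proposition's proof holds for any fixed \(c>0\), so applying it with \(c=c_0/2\) is legitimate.
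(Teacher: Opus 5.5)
Your proposal is correct and is essentially the paper's proof. Both fix the threshold \(\tfrac{c_0}{2}\sqrt{n\log n}\) in Proposition~\ref{no_large_crossings} and use the greedy union of components to rule out every component having small \(X\)-part. Both then show that a component with a large side misses fewer than \(\tfrac{c_0}{2}\sqrt{n\log n}\) vertices on each side. The only difference is presentation: you assume only that the \(X\)-parts are small, which gives a component with large \(X\)-part directly and avoids the paper's separate symmetric case \(|C\cap Y|\ge m\).
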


\begin{proof}
Let $m:=\tfrac{c_0}{2}\sqrt{n\log n}$ and consider $G[X,Y]$.

\medskip
\noindent\textbf{Case 1.}
\emph{Every connected component \(D\) of \(G[X,Y]\) satisfies
\(|D\cap X|<m\) and \(|D\cap Y|<m\).}

Form a union \(S\) of components by adding components with nonempty
intersection with \(X\) one by one until \(|S\cap X|\ge m\), and stop as soon
as this happens. Since each added component contributes fewer than \(m\)
vertices of \(X\), we have
\[
m \le |S\cap X| < 2m.
\]
If \(|Y\setminus S|\ge m\), then there are no edges between \(S\cap X\) and
\(Y\setminus S\), because \(S\) is a union of connected components of
\(G[X,Y]\). This contradicts Proposition~\ref{no_large_crossings}. Hence
\[
|Y\setminus S|<m.
\]
Therefore
\[
|S\cap Y|=|Y|-|Y\setminus S|\ge s_0-m\ge m.
\]
Also,
\[
|X\setminus S|=|X|-|S\cap X|\ge s_0-2m\ge m.
\]
Since there are no edges between \(S\cap Y\) and \(X\setminus S\), this again
contradicts Proposition~\ref{no_large_crossings}.

\medskip
\noindent\textbf{Case 2.}
\emph{Some component \(C\) has a side of size at least \(m\).}

Let \(C\) be such a component, and assume \(|C\cap X|\ge m\); the case
\(|C\cap Y|\ge m\) is analogous. If \(|Y\setminus C|\ge m\), take any
\(m\)-set \(Y_0\subseteq Y\setminus C\). By
Proposition~\ref{no_large_crossings} applied to \((C\cap X,Y_0)\), there
exists an edge between \(C\cap X\) and \(Y_0\), contradicting the fact that
\(C\) is a connected component of \(G[X,Y]\). Therefore
\[
|Y\setminus C|<m.
\]
Hence
\[
|C\cap Y|=|Y|-|Y\setminus C|\ge s_0-m\ge m.
\]
Now let \(X_0:=X\setminus C\). If \(|X_0|\ge m\), then applying
Proposition~\ref{no_large_crossings} to \((X_0,C\cap Y)\) gives an edge
joining \(X_0\) to \(C\), again contradicting that \(C\) is a connected
component. Thus \(|X_0|<m\), and we conclude that
\[
|(X\cup Y)\setminus C|
=
|X\setminus C|+|Y\setminus C|
<2m
=
c_0\sqrt{n\log n}.
\]
This proves the corollary.
\end{proof}

We are now ready to prove the main theorem.

\begin{proof}[Proof of Theorem~\ref{thm:three-component-bipartite}]
Let \(p \gg \sqrt{\tfrac{\log n}{n}}\), and let \(G \sim G(n,n,p)\) with
bipartition \((A,B)\), where \(|A|=|B|=n\). Fix a small absolute constant
\(c_0>0\), and set
\[
s_0=s_0(n):=100c_0\sqrt{n\log n}.
\]

We work on the high-probability event on which Lemma~\ref{degree_codegree},
Lemma~\ref{logn_expanding}, and Corollary~\ref{big_component} all hold.
Thus every vertex \(v\) of \(G\) satisfies
\begin{equation}
    \deg_G(v)\ge \frac12 pn \gg \sqrt{n\log n},
\end{equation}
and every pair of distinct vertices \(u,u'\) in the same part satisfies
\begin{equation}
|N(u)\cap N(u')|\ge \frac12 p^2n\gg \log n.
\end{equation}
Moreover, for all subsets \(X\subseteq A\) and \(Y\subseteq B\) with
\(|X|,|Y|\ge s_0\), the induced bipartite subgraph \(G[X,Y]\) contains a
connected component \(C\) such that
\begin{equation}
|(X\cup Y)\setminus C|\le c_0\sqrt{n\log n}.
\end{equation}

It remains to prove that every red--blue edge-colouring of this fixed graph
\(G\) admits a cover of \(V(G)\) by at most three monochromatic connected
components. Fix an arbitrary red--blue edge-colouring of \(E(G)\).
Among all monochromatic connected components, choose one whose larger side is
as large as possible. Renaming colours if necessary, let this component be
blue, and denote it by \(C\), with sides \(A_1\subseteq A\) and
\(B_1\subseteq B\), where
\(|B_1|\ge |A_1|.
\)
By Lemma~\ref{degree_codegree}, for any vertex \(v\), one of its two
colour-neighbourhoods has size at least
\( \frac12\deg_G(v)\ge \frac14pn\gg s_0. \)
Hence some monochromatic connected component has one side of size at least
\(\frac14pn\gg s_0\). By the choice of \(C\), we may assume \(|B_1|\ge s_0.
\)
Denote
\[
A_1^{c}:=A\setminus A_1,
\qquad
B_1^{c}:=B\setminus B_1.
\]

We first show that the larger side of \(C\) must be close to \(n/2\). Suppose
for contradiction that
\begin{equation}\label{eq:B1-too-small}
|B_1|\le \frac n2-\frac{s_0}{100}.
\end{equation}
Since \(|A_1|\le |B_1|\), we have
\[
|A_1^c|=n-|A_1|
\ge n-|B_1|
\ge \frac n2+\frac{s_0}{100}.
\]
By maximality of the blue component \(C\), there are no blue edges between
\(B_1\) and \(A_1^c\). Therefore every present edge in \(G[B_1,A_1^c]\) is red.
Since \(|B_1|\ge s_0\) and \(|A_1^c|\ge s_0\), Corollary~\ref{big_component}
applied to \(G[B_1,A_1^c]\) gives a red component \(R\) such that
\[
|(B_1\cup A_1^c)\setminus R|
\le c_0\sqrt{n\log n}
=
\frac{s_0}{100}.
\]
Consequently,
\[
|A\cap R|
\ge |A_1^c|-\frac{s_0}{100}
\ge
\left(\frac n2+\frac{s_0}{100}\right)-\frac{s_0}{100}
=
\frac n2.
\]
Thus \(R\) is a monochromatic component whose larger side has size at least
\(n/2\), contradicting the choice of \(C\). 
Therefore the larger side $B_1$ satisfies 
\begin{equation}\label{eq:B1-large}
|B_1|> \frac n2-\frac{s_0}{100}.
\end{equation}

We next show that the smaller side of our chosen component may also be assumed
to be close to \(n/2\). Suppose that
\[
|A_1|<\frac n2-s_0.
\]
Then
\[
|A_1^c|>\frac n2+s_0.
\]
By maximality of the blue component \(C\), there are no blue edges between
\(B_1\) and \(A_1^c\), so every present edge in \(G[B_1,A_1^c]\) is red.
Using \eqref{eq:B1-large} and applying Corollary~\ref{big_component} to
\(G[B_1,A_1^c]\), we obtain a red component \(R\) such that
\[
|(B_1\cup A_1^c)\setminus R|
\le c_0\sqrt{n\log n}
=
\frac{s_0}{100}.
\]
Therefore
\[
|R\cap A_1^c|
\ge |A_1^c|-\frac{s_0}{100}
>
\frac n2+s_0-\frac{s_0}{100}
>
\frac n2-s_0,
\]
and also
\[
|R\cap B_1|
\ge |B_1|-\frac{s_0}{100}
>
\left(\frac n2-\frac{s_0}{100}\right)-\frac{s_0}{100}
=
\frac n2-\frac{s_0}{50}
>
\frac n2-s_0.
\]
Thus \(R\) is a red connected subgraph whose two sides both have size at
least \(n/2-s_0\). Let \(R^\ast\) be the maximal red component of \(G\)
containing \(R\). Then \(R^\ast\) also has both sides of size at least
\(n/2-s_0\).

Consequently, replacing \(C\) by \(R^\ast\) if necessary, and renaming colours
and interchanging the two bipartition classes if necessary, we may assume that
\(C\) is a blue component with sides \(A_1\subseteq A\) and \(B_1\subseteq B\),
where
\begin{equation}\label{A_1B_1}
|B_1|\ge |A_1|\ge \frac n2-s_0\gg s_0.
\end{equation}

The remainder of the proof consists of a rather long case analysis. 
Although the discussion is somewhat technical, each case follows the same guiding idea: 
starting from the large monochromatic component $C$, we analyze how the remaining vertices 
can be absorbed into at most two further monochromatic components. We introduce the threshold $2s_0$ to separate the cases; this choice is purely technical.\\

\noindent\textbf{Case~1.} 
$|A_1^c| < 2s_0$ and $|B_1^c| < 2s_0$.

\medskip
\noindent\textbf{Case~(1.1).
One of the remainder sets, say $A_1^c$, is contained in a single maximal red component.}
\smallskip
Let \(R\) denote the maximal red component containing \(A_1^c\), and set
\[
B' := B_1^c\setminus R.
\]
We claim that \(B'\) is contained in a single monochromatic component. If
\(|B'|\le 1\), this is immediate, so assume \(|B'|\ge 2\).

By the codegree estimate from Lemma~\ref{degree_codegree}, for any two
distinct vertices \(y_1,y_2\in B\) we have
\[
|N(y_1)\cap N(y_2)|\ge \frac12p^2n\gg \log n.
\]
Hence, for every pair \(y_1,y_2\in B'\), the common neighbourhood
\(N(y_1)\cap N(y_2)\) is nonempty and lies in \(A=A_1\cup A_1^c\).

If \((N(y_1)\cap N(y_2))\cap A_1\neq\varnothing\), choose
\(x\in A_1\cap N(y_1)\cap N(y_2)\). Since \(y_1,y_2\in B_1^c\), maximality of
the blue component \(C\) implies that the edges \(xy_1\) and \(xy_2\) are red.
Thus \(y_1\) and \(y_2\) are red-connected.

If \((N(y_1)\cap N(y_2))\cap A_1^c\neq\varnothing\), choose
\(x\in A_1^c\cap N(y_1)\cap N(y_2)\). Since \(A_1^c\subseteq R\) and
\(y_1,y_2\notin R\), the edges \(xy_1\) and \(xy_2\) cannot be red. Hence they
are blue, and \(y_1\) and \(y_2\) are blue-connected.

To formalize this, consider the auxiliary \(2\)-edge-coloured complete graph
\(H\) on vertex set \(B'\), where two vertices are joined by a red edge if
they are red-connected in \(G\), and by a blue edge if they are blue-connected
in \(G\); if both hold, choose either colour arbitrarily. By
Lemma~\ref{lem:complete-graph-two-colour}, \(H\) contains a monochromatic
connected component spanning all of \(B'\). Therefore, in the original graph
\(G\), the set \(B'\) is contained in a single monochromatic connected
component.

Consequently, the entire vertex set of \(G\) is covered by at most three
monochromatic connected components: the blue component \(C\), the red
component \(R\), and the monochromatic component containing \(B'\).

\noindent\textbf{Case (1.2). Both \(A_1^c\) and \(B_1^c\) are not contained in a single red component.}
Recall that \eqref{A_1B_1} implies
\begin{equation}\label{A1cB1c}
|B_1^c|\le |A_1^c|.
\end{equation}

Since \(A_1^c\) is not contained in a single red component, we may choose
a proper union \(S\) of red components meeting \(A_1^c\) such that
\[
\frac12|A_1^c|
\le |S\cap A_1^c|
<|A_1^c|.
\]
Indeed, if one red component contains at least half of \(A_1^c\), choose it;
otherwise, add red components one at a time until their union first contains
at least half of \(A_1^c\).

By maximality of the blue component \(C\), all present edges between
\(A_1^c\) and \(B_1\) are red. Hence if a vertex \(b\in B_1\) belonged to
\(N(x_i)\cap N(x'_1)\), then \(b\) would form a red path
\[
x_i-b-x'_1,
\]
contradicting the fact that \(x_i\) and \(x'_1\) are not red-connected.
Therefore, for every \(i\),
\begin{equation}\label{eq:common-neigh-in-B1c}
N(x_i)\cap N(x'_1)\subseteq B_1^c.
\end{equation}

If \(k\le \sqrt{n/\log n}\), apply Lemma~\ref{logn_expanding} with \(m=k\)
to the vertices \(x'_1,x_1,\ldots,x_k\). Using
\eqref{eq:common-neigh-in-B1c}, we obtain
\begin{equation}
|B_1^c|
\ge
\left|\bigcup_{i=1}^k \bigl(N(x_i)\cap N(x'_1)\bigr)\right|
=
\omega(k\log n).
\end{equation}
Since \(k\ge |A_1^c|/2\), the right-hand side is larger than
\(|A_1^c|\) for all large \(n\), contradicting \eqref{A1cB1c}.

If \(k>k_0:=\sqrt{n/\log n}\), apply Lemma~\ref{logn_expanding} with
\(m=k_0\) to \(x'_1,x_1,\ldots,x_{k_0}\). Again using
\eqref{eq:common-neigh-in-B1c}, we get
\begin{equation}
|B_1^c|
\ge
\left|\bigcup_{i=1}^{k_0} \bigl(N(x_i)\cap N(x'_1)\bigr)\right|
=
\omega(k_0\log n)
=
\omega(\sqrt{n\log n}).
\end{equation}
This contradicts Case~1, where \(|B_1^c|<2s_0=O(\sqrt{n\log n})\).

Therefore Case~(1.2) cannot occur. We refer to the above reasoning as the common-neighbourhood expansion argument; in later applications, the argument will be iterated alternately between the two bipartition classes.

\noindent\textbf{Case 2.}
\(|A_1^c|\ge 2s_0\) and \(|B_1^c|<2s_0\).

\smallskip
In this case, consider the bipartite subgraph \(G[A_1^c,B_1]\). Since the
blue component \(C\) with sides \(A_1\) and \(B_1\) is maximal, there are no
blue edges between \(A_1^c\) and \(B_1\). Thus every present edge of
\(G[A_1^c,B_1]\) is red. Since \(|A_1^c|\ge 2s_0\) and
\[
|B_1|\ge \frac n2-s_0\gg s_0,
\]
we may apply Corollary~\ref{big_component}. It yields a red connected
component \(R_0\) such that
\begin{equation}\label{eq:case2-R}
\bigl|(A_1^c\cup B_1)\setminus R_0\bigr|
\le c_0\sqrt{n\log n}.
\end{equation}

Let \(R\) be the maximal red component of \(G\) containing \(R_0\). We claim
that \(A_1^c\subseteq R\). Indeed, take any vertex
\(x\in A_1^c\setminus R_0\), if such a vertex exists. By the degree estimate,
\[
|N(x)|\gg \sqrt{n\log n}.
\]
On the other hand,
\begin{equation}\label{eq:case2-BminusR}
|B\setminus R_0|
=
|B_1^c|+|B_1\setminus R_0|
\le
2s_0+c_0\sqrt{n\log n}
=
O(\sqrt{n\log n}).
\end{equation}
Therefore \(N(x)\) must intersect \(B_1\cap R_0\). Since every edge between
\(A_1^c\) and \(B_1\) is red, \(x\) is joined by a red edge to \(R_0\), and
hence \(x\in R\). Thus \(A_1^c\subseteq R\).

At this point we are in the situation of Case~(1.1): one of the two remainder
sets, namely \(A_1^c\), is contained in a single maximal red component. Hence
the same argument as in Case~(1.1) shows that \(V(G)\) is covered by at most
three monochromatic connected components: the blue component \(C\), the red
component \(R\), and one additional monochromatic component covering
\(B_1^c\setminus R\).

\noindent\textbf{Case 3.}
\(|A_1^c|\ge |B_1^c|\ge 2s_0\).

\smallskip
First consider the bipartite subgraph \(G[A_1^c,B_1]\). By maximality of the
blue component \(C\), there are no blue edges between \(A_1^c\) and \(B_1\);
hence every present edge of \(G[A_1^c,B_1]\) is red. Since
\(|A_1^c|\ge 2s_0\) and \(|B_1|\ge n/2-s_0\gg s_0\), Corollary~\ref{big_component}
gives a red connected component \(R_0\) in \(G[A_1^c,B_1]\) such that
\[
\bigl|(A_1^c\cup B_1)\setminus R_0\bigr|
\le c_0\sqrt{n\log n}.
\]
Let \(R\) be the maximal red component of \(G\) containing \(R_0\). Define
\[
A_2:=A_1^c\cap R,\qquad
Q_1:=B_1\setminus R,\qquad
X:=A_1^c\setminus R.
\]
Equivalently, \(X=A\setminus(A_1\cup A_2)\). Since \(R\supseteq R_0\), we have
\[
|Q_1|,|X|\le c_0\sqrt{n\log n}.
\]
If \(X=\emptyset\), then \(A_1^c\subseteq R\), and we are in the situation of
Case~(1.1). Hence \(V(G)\) is covered by at most three monochromatic connected
components. Thus we may assume \(X\neq\emptyset\).

Next consider the bipartite subgraph \(G[A_1,B_1^c]\). Again by maximality of
the blue component \(C\), there are no blue edges between \(A_1\) and \(B_1^c\);
hence every present edge of \(G[A_1,B_1^c]\) is red. Since
\(|A_1|\ge n/2-s_0\gg s_0\) and \(|B_1^c|\ge 2s_0\), Corollary~\ref{big_component}
gives a red connected component \(R_0'\) in \(G[A_1,B_1^c]\) such that
\[
\bigl|(A_1\cup B_1^c)\setminus R_0'\bigr|
\le c_0\sqrt{n\log n}.
\]
Let \(R'\) be the maximal red component of \(G\) containing \(R_0'\). Define
\[
B_2:=B_1^c\cap R',\qquad
P_1:=A_1\setminus R',\qquad
Y:=B_1^c\setminus R'.
\]
Equivalently, \(Y=B\setminus(B_1\cup B_2)\). Since \(R'\supseteq R_0'\), we have
\[
|P_1|,|Y|\le c_0\sqrt{n\log n}.
\]
If \(Y=\emptyset\), then \(B_1^c\subseteq R'\), and the symmetric version of
Case~(1.1) applies. Hence \(V(G)\) is covered by at most three monochromatic
connected components. Thus we may assume \(Y\neq\emptyset\).

If \(R\) and \(R'\) are the same red component, then this red component covers
all vertices except possibly those in
\[
P_1\cup X\cup Q_1\cup Y.
\]
Indeed, by the definitions above,
\[
A\setminus (R\cup R')\subseteq P_1\cup X,
\qquad
B\setminus (R\cup R')\subseteq Q_1\cup Y.
\]
Since
\[
|P_1|,|X|,|Q_1|,|Y|\le c_0\sqrt{n\log n},
\]
each side of the complement of this red component has size at most
\(2c_0\sqrt{n\log n}<2s_0\). Hence, applying the same argument as in
Case~1, with this red component playing the role of the initial large
component, we obtain a cover of \(V(G)\) by at most three monochromatic
connected components.

Therefore, from now on we may assume that \(R\) and \(R'\) are distinct red
components. In particular, there is no red edge between \(R\) and \(R'\).

Finally, consider the bipartite subgraph \(G[A_2,B_2]\). Since \(A_2\subseteq R\)
and \(B_2\subseteq R'\), and \(R\) and \(R'\) are distinct red components, there
are no red edges between \(A_2\) and \(B_2\). Hence every present edge of
\(G[A_2,B_2]\) is blue.

Moreover,
\[
|A_2|\ge |A_1^c|-|X|\ge 2s_0-c_0\sqrt{n\log n}\ge s_0,
\]
and similarly \(|B_2|\ge s_0\). Thus, by Corollary~\ref{big_component}, there
exists a blue connected component \(S\) in \(G[A_2,B_2]\) such that
\[
|(A_2\cup B_2)\setminus S|\le c_0\sqrt{n\log n}.
\]
Enlarge \(S\) to a maximal blue connected component of \(G\), and denote it by
\(R''\). Define
\[
P_2:=A_2\setminus R'',
\qquad
Q_2:=B_2\setminus R''.
\]
Since \(R''\supseteq S\), we have
\[
|P_2|,|Q_2|\le c_0\sqrt{n\log n}.
\]

By maximality of \(R''\), there are no blue edges from \(P_2\) to
\(B_2\setminus Q_2\), and no blue edges from \(Q_2\) to \(A_2\setminus P_2\).
Since every present edge between \(A_2\) and \(B_2\) is blue, it follows that
there are in fact no edges from \(P_2\) to \(B_2\setminus Q_2\), and no edges
from \(Q_2\) to \(A_2\setminus P_2\)\\

\noindent\textbf{Claim.} We have
\[
X\subseteq R'\cup R''
\qquad\text{and}\qquad
Y\subseteq R\cup R''.
\]

\begin{proof}
Fix \(x\in X\). Since \(X=A_1^c\setminus R\), we have
\[
N(x)\cap (B_1\cap R)=\emptyset.
\]
Indeed, every edge between \(A_1^c\) and \(B_1\) is red, so any neighbour of
\(x\) in \(B_1\cap R\) would put \(x\) in the red component \(R\).

Using the disjoint partition
\[
B=(B_1\cap R)\sqcup Q_1\sqcup Q_2\sqcup (B_2\cap R'')\sqcup Y,
\]
where \(Q_1=B_1\setminus R\) and \(Q_2=B_2\setminus R''\), we obtain
\[
N(x)\subseteq Q_1\cup Q_2\cup (B_2\cap R'')\cup Y.
\]
Since
\[
|Q_1|,|Q_2|,|Y|\le c_0\sqrt{n\log n}
\]
while
\[
|N(x)|\gg \sqrt{n\log n},
\]
it follows that
\[
N(x)\cap (B_2\cap R'')\neq\emptyset.
\]
Choose \(v\in N(x)\cap (B_2\cap R'')\). If \(xv\) is red, then
\(x\in R'\), because \(v\in B_2\subseteq R'\). If \(xv\) is blue, then
\(x\in R''\), because \(v\in R''\) and \(R''\) is a maximal blue component.
Thus \(x\in R'\cup R''\).

\smallskip
The argument for \(Y\) is symmetric. Fix \(y\in Y\). Since
\(Y=B_1^c\setminus R'\), we have
\[
N(y)\cap (A_1\cap R')=\emptyset.
\]
Using the disjoint partition
\[
A=(A_1\cap R')\sqcup P_1\sqcup P_2\sqcup (A_2\cap R'')\sqcup X,
\]
where \(P_1=A_1\setminus R'\) and \(P_2=A_2\setminus R''\), we obtain
\[
N(y)\subseteq P_1\cup P_2\cup (A_2\cap R'')\cup X.
\]
Since
\[
|P_1|,|P_2|,|X|\le c_0\sqrt{n\log n}
\]
while
\[
|N(y)|\gg \sqrt{n\log n},
\]
we obtain
\[
N(y)\cap (A_2\cap R'')\neq\emptyset.
\]
Choose \(u\in N(y)\cap (A_2\cap R'')\). If \(yu\) is red, then
\(y\in R\), because \(u\in A_2\subseteq R\). If \(yu\) is blue, then
\(y\in R''\), because \(u\in R''\) and \(R''\) is a maximal blue component.
Thus \(y\in R\cup R''\).
\end{proof}

\noindent\textbf{Case (3.1). Both \(P_1\) and \(Q_1\) are empty.}
In this case, every vertex of \(A_1\) lies in \(R'\), and every vertex of
\(B_1\) lies in \(R\). Hence the initial blue component \(C\) is contained in
\(R\cup R'\).

Moreover, \(A_2\subseteq R\) and \(B_2\subseteq R'\), while by the claim we have
\[
X\subseteq R'\cup R''
\qquad\text{and}\qquad
Y\subseteq R\cup R''.
\]
Therefore the three monochromatic components \(R\), \(R'\), and \(R''\) cover
all vertices of \(G\), and the desired conclusion holds.\\

\noindent\textbf{Case (3.2). Both \(P_1\) and \(Q_1\) are non-empty.}
Let us consider the two sets \(P_1\cup X\) and \(Q_1\cup Y\). Without loss of
generality, suppose that
\[
|P_1\cup X|=:m\ge |Q_1\cup Y|.
\]
Take \(p\in P_1\) and \(x\in X\). We claim that
\[
N(p)\cap N(x)\subseteq Q_1\cup Y.
\]
Indeed, \(x\) has no neighbour in \(B_1\cap R\), because every edge between
\(A_1^c\) and \(B_1\) is red, and such a neighbour would put \(x\) in \(R\).
Also, \(p\) has no neighbour in \(B_2\), because \(B_2\subseteq R'\), every
edge between \(A_1\) and \(B_1^c\) is red, and such a neighbour would put
\(p\) in \(R'\). Since
\[
B=(B_1\cap R)\sqcup Q_1\sqcup B_2\sqcup Y,
\]
we obtain \(N(p)\cap N(x)\subseteq Q_1\cup Y\).

Suppose first that \(|X|\ge m/2\). If
\[
\frac m2\le \sqrt{\frac n{\log n}},
\]
choose distinct vertices \(x_1,\ldots,x_{m/2}\in X\). Applying
Lemma~\ref{logn_expanding} to \(p,x_1,\ldots,x_{m/2}\), and using
\(N(p)\cap N(x_j)\subseteq Q_1\cup Y\), we get
\[
|Q_1\cup Y|
\ge
\left|\bigcup_{j=1}^{m/2}\bigl(N(p)\cap N(x_j)\bigr)\right|
=
\omega(m\log n),
\]
contradicting \(|Q_1\cup Y|\le m\).

If instead
\[
\frac m2> \sqrt{\frac n{\log n}},
\]
let \(k_0:=\sqrt{n/\log n}\), and choose distinct
\(x_1,\ldots,x_{k_0}\in X\). Applying Lemma~\ref{logn_expanding} to
\(p,x_1,\ldots,x_{k_0}\), we obtain
\[
|Q_1\cup Y|
\ge
\left|\bigcup_{j=1}^{k_0}\bigl(N(p)\cap N(x_j)\bigr)\right|
=
\omega(k_0\log n)
=
\omega(\sqrt{n\log n}).
\]
This contradicts
\[
|Q_1\cup Y|\le |Q_1|+|Y|\le 2c_0\sqrt{n\log n}.
\]

Therefore \(|X|<m/2\), and hence \(|P_1|\ge m/2\). Fix any \(x\in X\). The
same argument with the roles of \(P_1\) and \(X\) interchanged gives a
contradiction. Indeed, if \(m/2\le \sqrt{n/\log n}\), choose
\(p_1,\ldots,p_{m/2}\in P_1\) and apply Lemma~\ref{logn_expanding} to
\(x,p_1,\ldots,p_{m/2}\). If \(m/2>\sqrt{n/\log n}\), choose
\(p_1,\ldots,p_{k_0}\in P_1\). In both cases, using
\(N(p_j)\cap N(x)\subseteq Q_1\cup Y\), we obtain the same contradiction.

Hence Case~(3.2) cannot occur.\\

\noindent\textbf{Case (3.3). Exactly one of the two sets \(P_1\) and
\(Q_1\) is empty.}
By symmetry between the two bipartition classes, we may assume that
\(P_1\neq\emptyset\) and \(Q_1=\emptyset\).

We first record a consequence of the same argument used in Case~(3.2). For any
\(p\in P_1\) and \(x\in X\), we have
\[
N(p)\cap N(x)\subseteq Y.
\]
Indeed, since \(Q_1=\emptyset\), we have \(B_1\subseteq R\), and \(x\in X\)
has no neighbour in \(B_1\). Also, \(p\in P_1\) has no neighbour in \(B_2\),
as otherwise \(p\) would be red-connected to \(R'\). Hence every common
neighbour of \(p\) and \(x\) must lie in \(Y\).

Let
\[
M:=|P_1\cup X|.
\]
If \(|X|\ge M/2\), fix \(p\in P_1\) and apply Lemma~\ref{logn_expanding} to
\(p\) together with either \(M/2\) vertices of \(X\), if
\(M/2\le \sqrt{n/\log n}\), or with \(\sqrt{n/\log n}\) vertices of \(X\)
otherwise. Using \(N(p)\cap N(x)\subseteq Y\), we obtain
\[
|Y|=\omega(M).
\]
If instead \(|X|<M/2\), then \(|P_1|\ge M/2\), and the same argument with a
fixed \(x\in X\) and many vertices of \(P_1\) again gives
\[
|Y|=\omega(M).
\]
Thus
\begin{equation}\label{eq:Y-large-case33}
|Y|=\omega(|P_1\cup X|).
\end{equation}
We keep this fact in mind for the remaining argument.\\

We further split into subcases below. One note is that, if the relevant set \(T\) has size larger than
\(\sqrt{n/\log n}\), we apply Lemma~\ref{logn_expanding} to an arbitrary
\(\sqrt{n/\log n}\)-subset of \(T\). Since
\(|T|=O(\sqrt{n\log n})\), the resulting bound
\(\omega(\sqrt{n\log n})\) is still \(\omega(|T|)\).

\noindent\textbf{Case (3.3.1). \(Q_2 \neq \emptyset\).}

Set
\[
U:=P_2\cup X,
\qquad
V:=Q_2\cup Y.
\]
We first observe that, for every \(q\in Q_2\) and every \(y\in Y\),
\[
N(q)\cap N(y)\subseteq U.
\]
Indeed, a common neighbour cannot lie in \(A_1\): since \(q,y\in B_1^c\), all
edges from \(A_1\) to \(B_1^c\) are red, and such a common neighbour would
red-connect \(y\) to \(q\in B_2\subseteq R'\), contradicting \(y\notin R'\).
Also, a common neighbour cannot lie in \(A_2\cap R''\), because
\(q\in Q_2=B_2\setminus R''\) has no edge to \(A_2\cap R''\). Hence all common
neighbours lie in
\[
(A_2\setminus R'')\cup X=P_2\cup X=U.
\]

Similarly, for every \(p\in P_2\) and every \(x\in X\), we have \(
N(p)\cap N(x)\subseteq V.
\)

Now fix \(q\in Q_2\) and apply the alternating common-neighbourhood expansion argument to the pairs
\((q,y)\), \(y\in Y\). Since \(N(q)\cap N(y)\subseteq U\), Lemma~\ref{logn_expanding}
implies
\[
|U|=\omega(|Y|).
\]
By \eqref{eq:Y-large-case33}, we have \(|Y|=\omega(|P_1\cup X|)\), and in
particular \(|X|=o(|Y|)\). Therefore
\[
|P_2|=\omega(|Y|).
\]
In particular, \(P_2\neq\emptyset\).

Next fix \(x\in X\) and apply the same argument to the pairs \((p,x)\),
\(p\in P_2\). Since \(N(p)\cap N(x)\subseteq V\), we get
\[
|V|=\omega(|P_2|).
\]
As \(|P_2|=\omega(|Y|)\), this forces
\[
|Q_2|=\omega(|P_2|).
\]

Finally, fix \(y\in Y\) and apply the first containment again to the pairs
\((q,y)\), \(q\in Q_2\). We obtain
\[
|U|=\omega(|Q_2|).
\]
But \(U=P_2\cup X\), and since \(|X|=o(|Y|)\) while \(|P_2|=\omega(|Y|)\), we
have
\[
|U|=(1+o(1))|P_2|.
\]
This contradicts \(|Q_2|=\omega(|P_2|)\). Hence Case~(3.3.1) cannot occur.\\

\noindent\textbf{Case (3.3.2). \(Q_2=\emptyset\).}
First suppose that \(X\cap R'=\emptyset\). By the claim, we have
\[
X\subseteq R'\cup R''.
\]
Hence \(X\subseteq R''\). Also, again by the claim,
\[
Y\subseteq R\cup R''.
\]
Since \(Q_2=\emptyset\), we have \(B_2\subseteq R''\). Moreover,
\(A_1\cup B_1\subseteq C\) and \(A_2\subseteq R\). Therefore the three
monochromatic components \(C\), \(R\), and \(R''\) cover all vertices of \(G\).

Now suppose that \(X\cap R'\neq\emptyset\).

If \(P_1\subseteq R\), then, since \(P_1=A_1\setminus R'\),
\[
A_1=(A_1\cap R')\cup P_1\subseteq R'\cup R.
\]
Moreover, \(Q_1=\emptyset\) implies \(B_1\subseteq R\). Hence
\[
C\subseteq R\cup R'.
\]
Together with \(A_2\subseteq R\), \(B_2\subseteq R'\), and the claim
\[
X\subseteq R'\cup R'',
\qquad
Y\subseteq R\cup R'',
\]
it follows that \(R,R'\), and \(R''\) cover all vertices of \(G\).

We may therefore assume that there exists
\[
p\in P_1\setminus R.
\]
Since \(P_1=A_1\setminus R'\), we also have \(p\notin R'\).

We claim that \(X\subseteq R''\). Let \(x\in X\cap R'\). Since \(p\) and \(x\)
are distinct vertices in the same part of the bipartition, the codegree
estimate gives
\[
N(p)\cap N(x)\neq\emptyset.
\]
As observed in Case~(3.3), we have
\[
N(p)\cap N(x)\subseteq Y.
\]
Choose \(y\in N(p)\cap N(x)\). Since \(p\in A_1\) and
\(y\in Y\subseteq B_1^c\), the edge \(py\) is red. If \(y\in R\), then the
red edge \(py\) would imply \(p\in R\), a contradiction. Hence \(y\notin R\).
By the claim \(Y\subseteq R\cup R''\), it follows that
\[
y\in R''.
\]

If the edge \(xy\) were red, then \(x\in R'\) would imply \(y\in R'\).
The red edge \(py\) would then imply \(p\in R'\), contradicting
\(p\notin R'\). Therefore \(xy\) is blue. Since \(y\in R''\) and \(R''\)
is a maximal blue component, we obtain \(x\in R''\). Thus
\[
X\cap R'\subseteq R''.
\]
Together with the claim \(X\subseteq R'\cup R''\), this yields
\[
X\subseteq R''.
\]

Finally, \(Q_2=\emptyset\) implies \(B_2\subseteq R''\). Also,
\[
A_1\cup B_1\subseteq C,\qquad A_2\subseteq R,
\qquad Y\subseteq R\cup R''.
\]
Therefore the three monochromatic components \(C\), \(R\), and \(R''\)
cover all vertices of \(G\).
\end{proof}

\bibliographystyle{plain}
\addcontentsline{toc}{chapter}{Bibliography}
\bibliography{Gnpcombined}
\end{document}